\title{Fiber surfaces from alternating states}
\author{Darlan Gir\~ao}
\address{Department of Mathematics, 
Universidade Federal do Cear\'a, \newline Av. Humberto Monte, s/n, Campus do Pici, Bloco 914,  
\newline 60455-760, Fortaleza -- CE, Brasil }
\email{dgirao@mat.ufc.br}
\author{Jo\~ao M. Nogueira}
\address{CMUC, Department of Mathematics, University of Coimbra,\newline
Apartado 3008 EC Santa Cruz, 3001-454 Coimbra, Portugal}
\email{nogueira@mat.uc.pt}
\author{Ant\'onio Salgueiro}
\address{CMUC, Department of Mathematics, University of Coimbra,\newline
Apartado 3008 EC Santa Cruz, 3001-454 Coimbra, Portugal}
\email{ams@mat.uc.pt}
\thanks{The first author was partially supported by FUNCAP (project PJP-0072-00013.01.00/12) and CNPq (project Universal).\\
This work was partially developed while the second author was visiting the Department of Mathematics at Federal University of Cear\'a. He is thankful for the support by the host institution on this visit and also for the warm hospitality.\\
The second and third authors were partially supported by the Centro de Matem\'{a}tica da
Universidade de Coimbra (CMUC), funded by the European Regional
Development Fund through the program COMPETE and by the Portuguese
Government through the FCT - Funda\c{c}\~{a}o para a Ci\^{e}ncia e a Tecnologia
under the project PEst-C/MAT/UI0324/2011.}
\keywords{State surfaces, fibered links}
\subjclass[2010]{57M25, 57M15, 57M50}
\newtheorem{thm}{Theorem}
\newtheorem{lem}
{Lemma}          
\newtheorem{cor}
{Corollary}
\theoremstyle{definition}
\newtheorem{defn}
{Definition}    
\newtheorem{rem}{Remark}            
\begin{document}

\begin{abstract}    
In this paper we define alternating Kauffman states of links and we characterize when the induced state surface is a fiber. In addition, we give a different proof of a similar theorem of Futer, Kalfagianni and Purcell on homogeneous states.
\end{abstract}

\maketitle


\section{Introduction}

Let $K$ be a link in $S^3$.    We say  the link $K$ is \textit{fibered} if $S^3-K$   has the structure of a surface bundle over the circle, i.e., if  there exists a Seifert surface $S$ such that $S^3-K\cong (S\times[0,1])/\phi$, where $\phi$ is a homeomorphism of $S$. In  this case we abuse terminology and say \textit{$S$ is a fiber for $K$}.  The study of the  fibration of link complements has been a very active line of research in low dimensional topology. In the next two paragraphs we highlight some of the work in this area.

In the  early 60's Murasugi \cite{Mu} proved that an alternating link is fibered if and only if its reduced Alexander polynomial is monic. Stallings \cite{Stallings} proved that a link $K$ is fibered if and only if $\pi_1(S^3-K)$ contains a finitely generated normal subgroup whose quotient is $\mathbb{Z}$. Stallings' result is very general, but  hard to verify, even if we restrict to particular families of links. In \cite{Ha} Harer showed how to construct all fibered knots and links using \textit{Stallings' operations} introduced in \cite{Stallings}. However,  deciding whether or not a link $K$  is fibered is, in general, a hard problem.  Goodman--Tavares \cite{GT} showed that under simple conditions imposed on certain Seifert surfaces for pretzel links, it is possible to decide whether or not these surfaces are fibers.   In \cite{Gabai} Gabai proved that if a Seifert surface $S$ can be decomposed as the \textit{Murasugi sum} of surfaces $S_1,...,S_n$, then $S$ is a fiber if and only if each of the surfaces $S_i$ is a fiber (refer to theorem \ref{Gabai}). 

 Very recently Futer--Kalfagianni--Purcell \cite{FKP} introduced a new method for deciding whether some Seifert surface are fibers. From a particular diagram of the link, they construct an associated surface (called \textit{state surface}) and a certain graph. If the state is \textit{homogeneous}, they show that this surface is a fiber if and only if the corresponding graph is a tree (Theorem \ref{futer} below). Later, Futer \cite{Futer} gave a different, much simpler  proof of this result. Based on the work of Gabai \cite{Gabai} and Stallings \cite{Stallings}, the first author \cite{Girao} studied  fibration of state surfaces of augmented links.  This paper is concerned with the study of another class of state surfaces, which we now describe.

Given a diagram $D$ of a link $L$ we can construct a collection of disjoint disks connected by a twisted band at each crossing. We thus obtain a surface whose boundary is the link $L$. The disks and bands are defined by how we split the crossings in the diagram of $L$. At each crossing there are two choices of \textit{resolutions} for the split: an $A$-resolution or a $B$-resolution, as presented in Figure \ref{Figure:Resolutions}.\\   

\begin{figure}[ht]
\begin{center}
\includegraphics{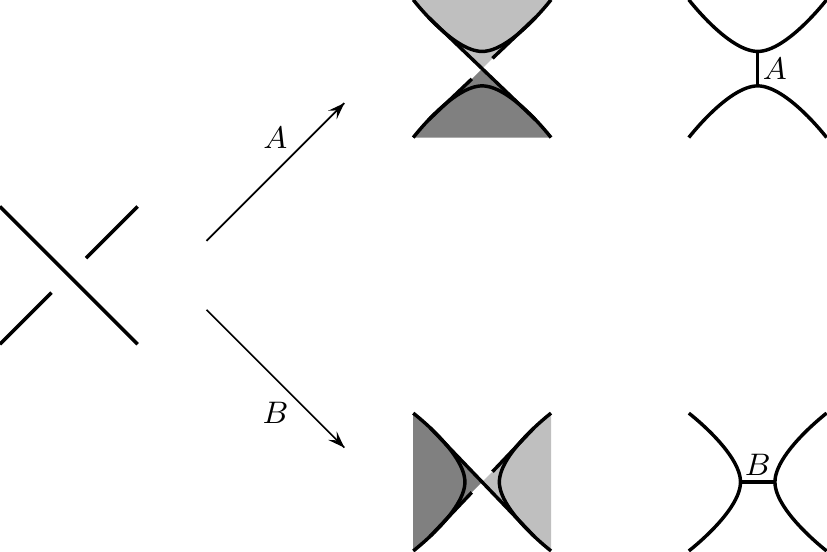}
\end{center}
\caption{: The two choices of resolutions for the split of a crossing.}
\label{Figure:Resolutions}
\end{figure}

A \textit{Kauffman state} $\sigma$ of a link diagram $D$ is a choice of resolution for each crossing of $D$. The resulting surface $S_\sigma$ is called the \textit{state surface} of $\sigma$. The boundaries of the disks induce a decomposition of the plane into connected components that we call \textit{regions}. The well known Seifert surface of an oriented diagram of a link is a particular case of a state surface, where the resolution of each crossing is defined by the orientation. It has been an interest of research to identify fibered knots and their fibers. We are interested in understanding when a state surface is a fiber.  In the work of  Futer, Kalfagianni and Purcell  \cite{FKP} it was studied for \textit{homogeneous states}, that is when all resolutions of the diagram in each region are the same (see Theorem \ref{futer}). Futer \cite{Futer} gave an alternate, much simpler proof of this theorem.   In this paper we provide a new approach for this theorem and we prove a similar result for a different type of Kauffman states, as in the next definition.

\begin{defn}
A Kauffman state $\sigma$ is said to be \textit{alternating} when for each circle defined by $\sigma$, with a choice of orientation on its boundary, if two consecutive crossings attached to it in the same region have the same resolution then they are adjacent to the same circles defined by $\sigma$.
\end{defn}

Before we present our main result, we associate two graphs to each state of a link diagram. The \textit{state graph} $G_{\sigma}$ has one vertex for each disk and one edge for each band defined by $\sigma$. We label the edges by the resolution of the respective crossings. The \textit{reduced graph} $G'_{\sigma}$ is obtained from $G_{\sigma}$ by eliminating duplicated edges, with the same label, between two vertices. From the state surface $S_{\sigma}$ we define also a \textit{reduced surface} $S'_{\sigma}$ by cutting duplicated bands with the same label attached to the same pair of disks.  We note that the graphs $G_\sigma$ and $G'_\sigma$ are not abstract graphs but instead they are embedded in the surfaces $S_\sigma$ and $S'_\sigma$ as their spines. An \textit{inner cycle}, of the state graph or a reduced version of it, is an innermost cycle in a certain region. Our main result is the following.

\begin{thm}\label{theorem:alternating}
Let $\sigma$ be an alternating state of a link diagram $D_L$. Then $E(L)$ fibers over the circle with fiber $S_{\sigma}$ if and only if the reduced graph $G'_{\sigma}$ is a tree.
\end{thm}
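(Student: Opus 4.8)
The plan is to deduce the theorem from Gabai's theorem (Theorem \ref{Gabai}) by exhibiting $S_\sigma$ as a Murasugi sum of elementary pieces whose fibering is transparent. I would split the argument into two stages: first pass from the state surface $S_\sigma$ to the reduced surface $S'_\sigma$, and then analyze $S'_\sigma$ one region at a time.

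\emph{First stage (Hopf band reduction).} Two parallel bands carrying the same label and joining the same pair of disks cobound an annulus with a full twist, i.e.\ a Hopf band. Removing a duplicated band therefore amounts to de-plumbing a Hopf band, and since the Hopf band is fibered, Gabai's theorem shows that $S_\sigma$ is a fiber if and only if $S'_\sigma$ is a fiber. This is exactly the step that lets us replace $G_\sigma$ by $G'_\sigma$, and it reduces the theorem to the statement that $S'_\sigma$ is a fiber if and only if $G'_\sigma$ is a tree.

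\emph{Second stage.} For the ``if'' direction, if $G'_\sigma$ is a tree then, since $G'_\sigma$ is a spine of $S'_\sigma$, the reduced surface deformation retracts onto a tree and is therefore a disk; an embedded disk is the fiber of its (unknotted) boundary, so $S'_\sigma$ is a fiber and, reinstating the Hopf plumbings of the first stage, so is $S_\sigma$. For the ``only if'' direction I argue by contrapositive: if $G'_\sigma$ is not a tree it contains an inner cycle $\gamma$ lying in a single region. A regular neighborhood of $\gamma$ in $S'_\sigma$ is an annulus $A$, and because $\gamma$ is innermost in its region this annulus can be de-plumbed, realizing $S'_\sigma$ as a Murasugi sum of $A$ and a complementary surface. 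The alternating hypothesis is precisely what controls $A$: since $\gamma$ passes through distinct disks, any two consecutive crossings along $\gamma$ are adjacent to different circles, so by the definition of an alternating state they carry different resolutions. Hence the labels strictly alternate around $\gamma$, the half-twists contributed by the bands alternate in sign and cancel, and $A$ is an untwisted unknotted annulus, which is not a fiber. Gabai's theorem then forces $S'_\sigma$, and therefore $S_\sigma$, not to be a fiber.

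The main obstacle, I expect, is making the two plumbing claims rigorous: verifying that excising a duplicated band, and that removing a neighborhood of an innermost cycle, are genuine Murasugi sum decompositions (in particular that the gluing regions are disks, so that Gabai's theorem applies), and checking carefully that the alternating condition really produces a strictly alternating sequence of resolutions around every inner cycle, so that the resulting annulus has zero framing. A secondary point to track through both reductions is the orientability of $S_\sigma$, which is necessary for it to be a fiber at all.
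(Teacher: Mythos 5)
Your overall strategy --- de-plumbing Hopf bands to pass to $G'_\sigma$, then Murasugi-decomposing an innermost cycle into an untwisted annulus and applying Gabai's theorem --- is essentially the route the paper takes (its Corollaries \ref{duplicated edges} and \ref{reduced surface}, and Lemmas \ref{X cycle} and \ref{alternating inner cycle}). However, there is a genuine gap at the point where you invoke the alternating hypothesis. The definition of an alternating state only constrains two crossings that are \emph{consecutive on a circle within a region}. Two successive edges of your inner cycle $\gamma$ at a shared disk $v$ need not be consecutive in this sense: other bands may attach to $v$ inside the region bounded by $\gamma$ (by innermostness these belong to tree components of the graph), in which case the definition says nothing about the two $\gamma$-edges, and your claim that the labels strictly alternate around $\gamma$ does not follow. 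The paper repairs exactly this point in its proof of Theorem \ref{theorem:alternating}: it first uses Lemma \ref{murasugi decomposition} to de-plumb every tree component hanging into the interior of the cycle (each induces a disk, hence a fiber, so no fibering information is lost), and only then are consecutive edges of the cycle genuinely consecutive crossings in the region, so that the alternating condition yields alternating labels.

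A second, smaller issue: your justification that consecutive crossings of $\gamma$ ``are adjacent to different circles'' fails when $\gamma$ is a bigon, i.e., two edges with different labels joining the same pair of disks --- there the two crossings \emph{are} adjacent to the same circles, so the alternating hypothesis gives you nothing. The conclusion (an untwisted annulus, hence not a fiber) still holds simply because the two labels differ, but this case has to be handled separately, as the paper does in Corollary \ref{duplicated edges} and the remark following it. With these two repairs your argument matches the paper's.
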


The next examples illustrate that the classes of link diagrams in theorems \ref{theorem:alternating} and \ref{futer} are distinct. Certain states can be both homogeneous and alternating, as for example the Seifert state of the Figure eight knot as in Figure \ref{Figure:Figure_eight_knot}.

\begin{figure}[ht]
\begin{center}
\includegraphics{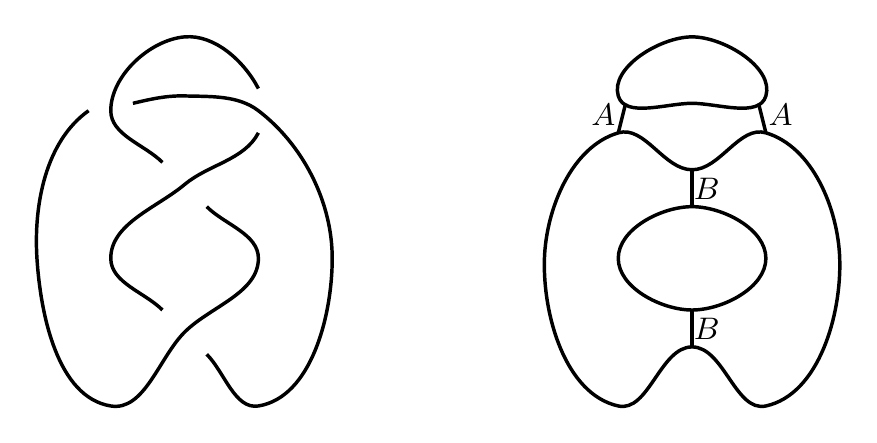}
\end{center}
\caption{: The Seifert state of this Figure eight knot diagram is a fiber by theorems \ref{theorem:alternating} and \ref{futer}.}
\label{Figure:Figure_eight_knot}
\end{figure}

But in general a state isn't both homogeneous and alternating. For instance, in the Figure \ref{Figure:12crossings} the Seifert state is alternating and not homogeneous.\\

\begin{figure}[ht]
\begin{center}
\includegraphics{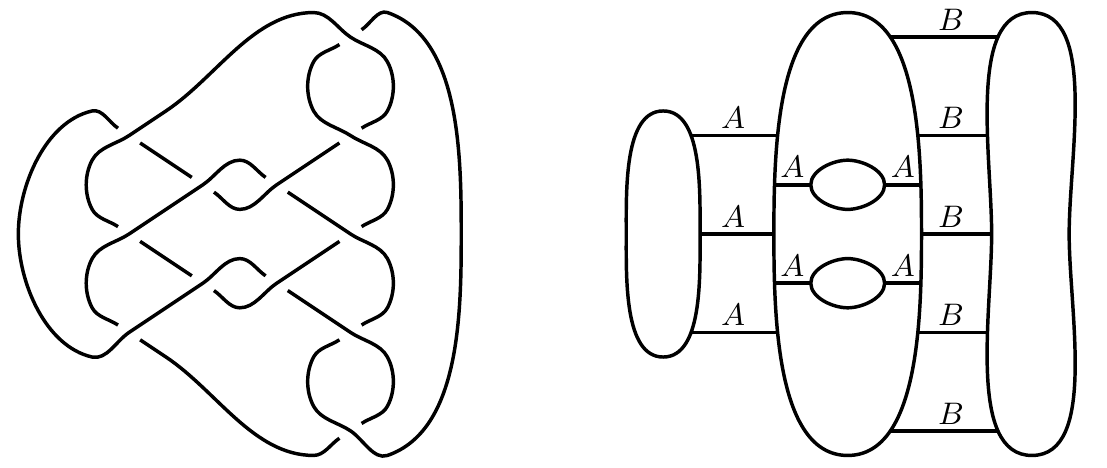}
\end{center}
\caption{: The knot $12n0328$ is prime, the Seifert state of this diagram is alternating and not homogeneous, and the corresponding state surface is a fiber by Theorem \ref{theorem:alternating}.}
\label{Figure:12crossings}
\end{figure}
Furthermore, in the example shown in Figure \ref{Figure:double_trefoil}, the Seifert state is homogeneous but not alternating.

\begin{figure}[ht]
\begin{center}
\includegraphics{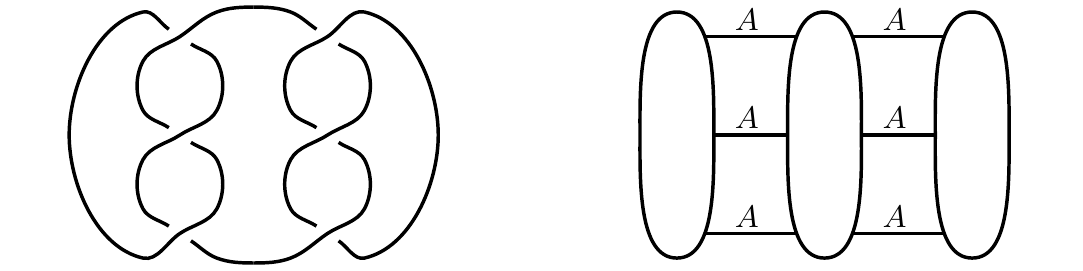}
\end{center}
\caption{: The Seifert state of this granny knot diagram is homogeneous and not alternating, and the corresponding state surface is a fiber by Theorem \ref{futer}.}
\label{Figure:double_trefoil}
\end{figure}

The reduced graph of the state in the examples of Figures \ref{Figure:Figure_eight_knot}, \ref{Figure:12crossings}
 and \ref{Figure:double_trefoil} is a tree, so in these particular cases the state surface is a fiber. We notice that if $G_{\sigma}$ has edges with different labels between the same pair of vertices then $G'_{\sigma}$ is not a tree and, by Theorem \ref{theorem:alternating}, $S_{\sigma}$ is not a fiber.\\

In section \ref{section:alternating} we prove this theorem using Murasugi sums and results of Gabai on knot fibration. In section \ref{homogeneous} we give a different, homological proof, of the following theorem of Futer, Kalfagianni and Purcell \cite{FKP} on homogeneous states. The techniques we use in our proof are similar to the ones in the paper \cite{Girao} by the first author, where he studies the fibration of augmented link complements.

\begin{thm}\label{futer}
Let $\sigma$ be a homogeneous state of a link diagram $D_L$. Then $E(L)$ fibers over the circle with fiber $S_{\sigma}$ if and only if the reduced graph $G'_{\sigma}$ is a tree.
\end{thm}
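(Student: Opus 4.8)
The plan is to deduce Theorem \ref{futer} from Stallings' fibration criterion \cite{Stallings}, recast homologically in the spirit of \cite{Girao}. Write $M = E(L)\setminus\setminus S_\sigma$ for the manifold obtained by cutting the link exterior along the state surface, and let $S_\sigma^+,S_\sigma^-\subset\partial M$ be the two copies of $S_\sigma$ coming from its two sides, with pushoff maps $i_\pm\colon H_1(S_\sigma)\to H_1(M)$. By Stallings' theorem, $S_\sigma$ is a fiber for $L$ exactly when $M\cong S_\sigma\times[0,1]$, and this holds if and only if $i_+$ and $i_-$ are both isomorphisms. The first step is therefore to show that, for a homogeneous state, $S_\sigma$ is a \emph{free} Seifert surface, i.e.\ that $M$ is a handlebody; this I would prove from the disk-band description of $S_\sigma$ by an induction on innermost regions. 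Granting this, $\pi_1(M)$ is free, and by Alexander duality its rank equals the rank of $H_1(S_\sigma)$, so that $i_\pm$ are maps of free abelian groups of equal rank. Since the spine $G_\sigma$ of $S_\sigma$ generates, the pushoffs are $\pi_1$-surjective, and by Hopfianity of free groups the fiber condition reduces to checking that $i_+$ and $i_-$ are isomorphisms on $H_1$.

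Next I would make the maps $i_\pm$ explicit and compute them region by region. The surface $S_\sigma$ deformation retracts to $G_\sigma$, so $H_1(S_\sigma)\cong H_1(G_\sigma)$ is free of rank $c-n+1$, where $n$ is the number of disks (vertices) and $c$ the number of bands (edges). The pushoff maps are governed by the Seifert linking form $V(x,y)=\mathrm{lk}(x^+,y)$, and $i_+$ (respectively $i_-$) is an isomorphism precisely when the corresponding integer matrix is unimodular. The heart of the argument is to choose a basis of $H_1(S_\sigma)$ consisting of inner cycles, ordered from innermost outward within each region, adapted to the nesting of regions. Here homogeneity is used essentially: since all bands in a given region carry the same sign, the linking of an inner cycle with any basis cycle lying outside it, computed on a fixed side, vanishes. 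This makes the matrix of $i_\pm$ block-triangular with respect to the chosen ordering, so that its determinant is the product of the self-linkings of the basis inner cycles.

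It then remains to evaluate the diagonal blocks and match them with the combinatorics of $G'_\sigma$. The smallest inner cycles are bigons formed by two bands joining the same pair of disks. If these two bands carry the same label they are, by homogeneity, two equally-signed half-twisted bands, so the bigon is a Hopf band and its self-linking is $\pm1$; such a pair is exactly a duplicated edge removed in passing from $G_\sigma$ to $G'_\sigma$, contributing an invertible $1\times 1$ block. Any inner cycle that survives in $G'_\sigma$ — equivalently, any generator of $H_1(G'_\sigma)$ — either runs through bands of different labels or encircles a nontrivial region, and in each such case the associated diagonal block of $V$ is singular. Consequently the triangular matrix is unimodular if and only if $H_1(G'_\sigma)=0$, that is, if and only if $G'_\sigma$ is a tree. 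Combining this with the reduction of the first paragraph yields Theorem \ref{futer}.

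The main obstacle is the second and third steps together: establishing the block-triangular form of $V$ and, above all, verifying the behavior of the diagonal self-linkings — that same-label bigons give $\pm1$ while every other independent cycle yields a singular block. This is precisely the point where homogeneity cannot be dispensed with, and it is the analogue of the region-by-region analysis of \cite{Girao}. Two further points need care. First, one must confirm that cutting along a homogeneous state surface really produces a handlebody, since the whole homological reduction rests on the freeness of $S_\sigma$. Second, for links of several components the symmetric determinant $\det(V-V^{T})$ degenerates on the boundary classes, so one should argue directly that $i_+$ and $i_-$ are each isomorphisms rather than passing through a single determinant. Alternatively, one can shorten the ``if'' direction by recognizing each same-label bigon as a plumbed Hopf band and appealing to Gabai's theorem (Theorem \ref{Gabai}), exactly as in the proof of Theorem \ref{theorem:alternating}, reserving the homological computation for the ``only if'' direction.
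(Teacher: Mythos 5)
Your overall strategy---Stallings' criterion reduced to a homological computation of the pushoff map on a basis of inner cycles, after first reducing to the case where the complement of the surface has free fundamental group---is the same as the paper's, and your easy direction (tree $\Rightarrow$ disk $\Rightarrow$ fiber, or plumbing of Hopf bands via Theorem \ref{Gabai}) is fine. But the core of your hard direction rests on two claims that are not correct as stated. First, the matrix of $i_{\pm}$ in a basis of inner cycles is \emph{not} block-triangular with $1\times 1$ diagonal blocks given by self-linkings: two inner cycles of the same all-$A$ (or all-$B$) piece that share an edge of $G'_{\sigma}$ are adjacent, not nested, so neither ``lies outside'' the other and their mutual linking does not vanish. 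Consequently the determinant is not the product of the self-linkings. What one actually obtains (and what the paper establishes) is a diagonally dominant integer matrix, with off-diagonal entries recording shared edges and $a_{ii}\geq\max(2,\sum_{j\neq i}|a_{ij}|)$; the tridiagonal example closing the paper's proof is such a matrix and is visibly not triangular.

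Second, your assertion that every diagonal block attached to a cycle surviving in $G'_{\sigma}$ is \emph{singular} is false: a single surviving inner cycle with $k\geq 4$ same-label edges gives a $1\times 1$ block equal to its self-linking, which has absolute value at least $2$---nonsingular, but not a unit. The statement you actually need, and the genuinely nontrivial step you have skipped, is that the whole diagonally dominant block fails to be \emph{unimodular}. The paper proves this with a separate determinant theorem: if $a_{ii}\geq\max\bigl(2,\sum_{j\neq i}|a_{ij}|\bigr)$ for all $i$, then $\det(\mathcal{A})$ is either $0$ or at least $2$. Without that lemma or a substitute, your argument does not exclude the possibility that the pushoff is an isomorphism on $H_1$. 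A smaller point: your reduction to the free/handlebody situation should pass explicitly through the cut-vertex (Murasugi sum) decomposition of $G'_{\sigma}$ into all-$A$ and all-$B$ pieces, as in Lemma \ref{murasugi decomposition} and Futer's Lemma 3; only after that reduction is $S'_{\sigma}$ a checkerboard surface, so that its complement has free fundamental group and the region-by-region bookkeeping you describe makes sense.
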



\section{Fibers from alternating states}\label{section:alternating}

For this section we use a specific concept of graph decomposition: We say that two vertices, $v$ and $w$, \textit{decompose} a graph $G$ into components $G_1,\ldots, G_k$ if \linebreak$G=G_1\cup\cdots \cup G_k$ and $G_i\cap G_j\subseteq \{v, w\}$, for $i\neq j$. We also make use of the the following theorem by Gabai \cite{Gabai} on Murasugi sum and knot fibration.

\begin{defn}
We say that the oriented surface $T$ in $S^3$ with boundary $L$ is the Murasugi sum of the two oriented surfaces $T_1$ and $T_2$ with boundaries $L_1$ and $L_2$ if there exists a $2$-sphere $S$ in $S^3$  bounding  balls $B_1$ and $B_2$ with $T_i\subset B_i$ for $i=1,2$, such that $T=T_1\cup T_2$ and $T_1\cap T_2=D$ where $D$ is a $2n$-sided disk contained in $S$.  
\end{defn} 

\begin{thm}[Gabai]\label{Gabai}
Let $T\subset S^3$, with $\partial T=L$, be a Murasugi sum of oriented surfaces $T_i\subset S^3$, with $\partial T_i=L_i$, for $i=1, 2$. Then $S^3-L$ is fibered with fiber $T$ if and only if $S^3-L_i$ is fibered with fiber $T_i$ for $i=1, 2$.
\end{thm}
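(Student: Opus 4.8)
The plan is to recast both fiberedness and the Murasugi sum in the language of sutured manifolds and then to reduce the statement to how the product structure behaves under a decomposition along a disk, following Gabai's program. For a Seifert surface $R$ with $\partial R = L$, write $M(R) = S^3 - \operatorname{int} N(R)$ for the complementary sutured manifold; its boundary carries two copies $R_+$ and $R_-$ of $R$ together with a family of annular sutures $\gamma$ running along $\partial N(L)$. The first step, which I would establish directly, is the standard reduction: $S^3 - L$ fibers with fiber $R$ if and only if $(M(R),\gamma)$ is a \emph{product sutured manifold}, that is, homeomorphic to $(R\times I, \partial R\times I)$ respecting $R_\pm$. Indeed, a fibration with fiber $R$ cuts open along $R$ to give exactly $R\times I$, and conversely a product structure on $M(R)$ reconstructs the bundle by regluing $R\times\{0\}$ to $R\times\{1\}$ via the monodromy.

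Next I would identify the sutured-manifold shadow of the Murasugi sum. Cutting $S^3$ along the sphere $S$ from the definition, the disk $E$ obtained from $S$ by removing a neighborhood of $D$ is a properly embedded disk in $M(T)$ whose boundary meets the sutures $\gamma$ transversely in $2n$ points coming from the $2n$ sides of $D$. Decomposing $M(T)$ along $E$ separates the $B_1$ part from the $B_2$ part and yields two sutured manifolds naturally identified with $M(T_1)$ and $M(T_2)$; conversely $M(T)$ is recovered by gluing $M(T_1)$ and $M(T_2)$ along $E$. Combined with the first step, the theorem is now equivalent to the assertion that the glued manifold $M(T_1)\cup_E M(T_2)$ is a product if and only if each $M(T_i)$ is a product.

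For the ``if'' direction I would argue concretely. When both $M(T_i)=R_i\times I$ are products, an open book for $S^3$ with page $T_i$, binding $L_i$ and monodromy $h_i$ is available; the Murasugi sum then presents $S^3$ as an open book with page $T = T_1 *_D T_2$ and monodromy $\widetilde h_1\circ\widetilde h_2$, where $\widetilde h_i$ extends $h_i$ by the identity over $T_{3-i}$. As this composite is a homeomorphism of $T$, the complement $S^3 - L$ fibers with fiber $T$; this is the classical Stallings plumbing argument. The ``only if'' direction is the heart of Gabai's theorem and is where I expect the genuine difficulty. Assuming $M(T)$ is a product, hence taut, I would appeal to the fact that a sutured manifold decomposition along a sufficiently well-groomed surface preserves tautness, so that both $M(T_i)$ are taut. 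The obstacle is then upgrading ``taut'' to ``product'': one must show that the decomposition along $E$ carries the product (equivalently, norm-minimizing, taut-foliation) structure of $M(T)$ down to each piece, forcing each $M(T_i)$, which already has the homology of a product, to be taut with vanishing sutured Thurston norm and hence an actual product. This descent is not elementary; it rests on Gabai's existence theory for sutured manifold hierarchies and taut foliations, and is precisely the step for which no soft argument is available.
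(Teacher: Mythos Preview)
The paper does not prove this statement at all: Theorem~\ref{Gabai} is quoted from Gabai~\cite{Gabai} and used as a black box throughout Section~\ref{section:alternating}, so there is no ``paper's own proof'' to compare against. Your outline is therefore not a comparison target but an attempt to supply what the paper deliberately omits.

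As for the outline itself, the overall architecture is the right one and matches Gabai's original approach: translate fiberedness into the product property of the complementary sutured manifold, realize the Murasugi sum as a product-disk decomposition along $E$, and then argue that the product property passes across this decomposition in both directions. Your treatment of the ``if'' direction via the Stallings plumbing/open-book construction is standard and correct. For the ``only if'' direction you correctly identify the crux and honestly flag that tautness alone is not enough---one must show the product structure descends. However, your sketch here is more of a gesture than a proof: the phrase ``taut with vanishing sutured Thurston norm and hence an actual product'' hides exactly the nontrivial step (this is essentially Gabai's Lemma on product decompositions, or equivalently the statement that a taut sutured manifold all of whose decompositions terminate in products is itself a product). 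If you intend this as a genuine proof rather than a pointer to \cite{Gabai}, that step needs to be made precise; as written it is a correct roadmap but not a self-contained argument.
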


With the following lemma we are able to prove that we neither lose fibration information by working with the reduced state graph nor with graph decomposition. 

\begin{lem}\label{murasugi decomposition}
Let $G_{\sigma}$ be a state graph and suppose there are two vertices, $v$ and $w$, adjacent by the edge $X$, that decompose $G_\sigma$ into connected components $X$, $H_1$, $H_2$, $\ldots$, $H_k$. (See Figure \ref{Figure:reduction}.) Consider also the state surface $S_i$ induced by $\sigma$ and the subgraph $X\cup H_i$ of $G_\sigma$, $i=1,\ldots, k$. Then, $S_\sigma$ is a fiber if and only if each surface $S_1,\ldots, S_k$ is a fiber with respect to its boundary.

\begin{figure}[ht]
\begin{center}
\includegraphics{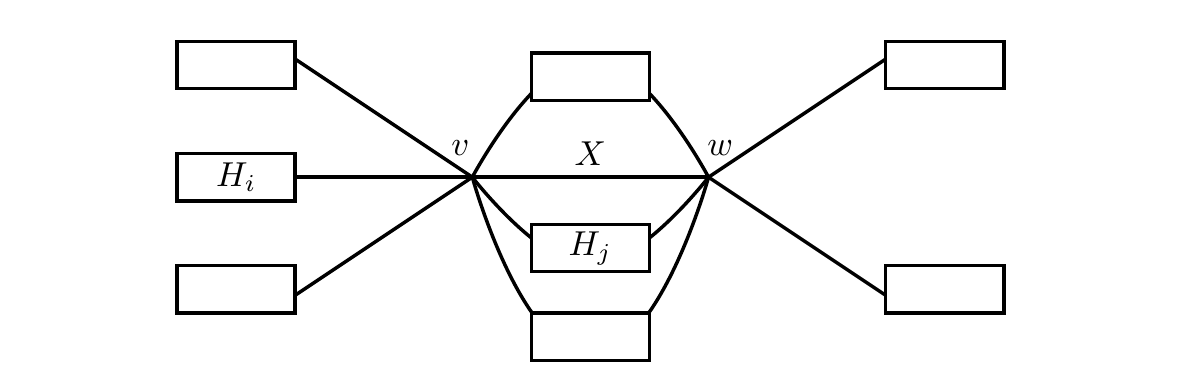}
\end{center}
\caption{: Representation of the decomposition of $G_\sigma$  by $v\cup w$.}
\label{Figure:reduction}
\end{figure}

\end{lem}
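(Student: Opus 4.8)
The plan is to exhibit $S_\sigma$ as an iterated Murasugi sum of $S_1,\dots,S_k$ and then quote Gabai's Theorem \ref{Gabai}. I would argue by induction on $k$; the case $k=1$ is vacuous, since then $S_\sigma=S_1$. For $k\ge 2$, let $S_{\mathrm{rest}}$ denote the state surface induced by $\sigma$ and the subgraph $X\cup H_2\cup\cdots\cup H_k$. This subgraph is again decomposed by $v$ and $w$ into the components $X,H_2,\dots,H_k$, so the inductive hypothesis applies to $S_{\mathrm{rest}}$. Hence it suffices to prove that $S_\sigma$ is a Murasugi sum of $S_1$ and $S_{\mathrm{rest}}$: Theorem \ref{Gabai} then gives that $S_\sigma$ is a fiber if and only if $S_1$ and $S_{\mathrm{rest}}$ are both fibers, and the induction applied to $S_{\mathrm{rest}}$ yields the equivalence with each of $S_1,\dots,S_k$ being a fiber.

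To produce this Murasugi sum I would first identify the gluing disk. Since $v$ and $w$ decompose $G_\sigma$ with pairwise intersections contained in $\{v,w\}$, the subgraphs $X\cup H_1$ and $X\cup H_2\cup\cdots\cup H_k$ meet exactly along the edge $X$ together with its endpoints. On the level of surfaces this means that the disks $D_v$ and $D_w$ corresponding to $v$ and $w$ and the twisted band corresponding to $X$ are common to $S_1$ and $S_{\mathrm{rest}}$, and that $S_1\cap S_{\mathrm{rest}}=D_v\cup X\cup D_w=:D$ (see Figure \ref{Figure:reduction}). As a band joining two disks, $D$ is an embedded disk in $S^3$, and $S_\sigma=S_1\cup S_{\mathrm{rest}}$ with the two pieces meeting precisely along $D$.

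It then remains to place $D$ on a $2$-sphere realizing the Murasugi sum. The bands of $H_1$ attach to $D_v$ and $D_w$ along arcs of $\partial D$, as do the bands of $H_2,\dots,H_k$; along $\partial D$ these two families are separated by the free arcs of $\partial S_\sigma$, and counting the attaching arcs gives $D$ the $2n$-gon structure required in the definition of Murasugi sum. The real content is to separate the two families by an embedded sphere. Because $\{v,w\}$ is a cut pair, the disks and bands of $H_1$ are disjoint from those of the remaining components away from $D$; and because the diagram $D_L$ is planar, around each of $D_v$ and $D_w$ the $H_1$-attaching arcs occur consecutively rather than interleaved with the others. Granting this, I can isotope $S_1$ to one side of $D$ and $S_{\mathrm{rest}}$ to the other, cap $D$ off by a disk in the complementary region, and obtain a sphere $\Sigma$ bounding balls $B_1\supset S_1$ and $B_2\supset S_{\mathrm{rest}}$ with $S_1\cap S_{\mathrm{rest}}=D\subset\Sigma$. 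This realizes $S_\sigma$ as the Murasugi sum of $S_1$ and $S_{\mathrm{rest}}$.

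The step I expect to be the main obstacle is exactly this separation: checking that the cut-pair hypothesis together with the planarity of $D_L$ forces the $H_1$-attaching arcs not to interleave with the others around $D_v$ and $D_w$, so that an embedded separating sphere does exist. Once the non-interleaving is established, the construction of $\Sigma$ and the verification of the $2n$-gon condition are routine, and applying Theorem \ref{Gabai} along the induction completes the argument.
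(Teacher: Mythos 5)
Your overall strategy---realizing $S_\sigma$ as an iterated Murasugi sum of the $S_i$ and invoking Gabai's theorem---is exactly the strategy of the paper, and the inductive bookkeeping is fine. The gap is in the step you yourself flag as the main obstacle: your proposed justification for the separation does not hold. It is not true that planarity together with the cut-pair condition forces the $H_1$-attaching arcs to be consecutive around $D_v$ and $D_w$ for an arbitrary component $H_1$ of the decomposition. For a concrete failure, let $G_\sigma$ consist of the edge $X$ together with four internally disjoint paths from $v$ to $w$ appearing in the cyclic order $X, a_1, b_1, a_2, b_2$ around $v$, and take $H_1=a_1\cup a_2$ and $H_2=b_1\cup b_2$; each $H_i$ is connected and $H_1\cap H_2=\{v,w\}$, so this is a legitimate decomposition in the sense of the paper, yet the two attaching arcs of $H_1$ at $D_v$ are separated by an arc of $H_2$. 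A similar problem occurs when one component sits inside a cycle of another that is based at a single shared vertex. In such configurations no sphere containing $D=D_v\cup X\cup D_w$ separates $S_1$ from $S_{\mathrm{rest}}$, so the single Murasugi sum you want does not exist, and ``granting this'' is granting a false statement.

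The paper avoids this by never splitting off an arbitrarily chosen component: it peels off one component at a time, always choosing one that is \emph{innermost} with respect to $X$ in the planar embedding (no other component between it and $X$), for which the attaching arcs are consecutive and the separating sphere is visible in the diagram; it also treats separately the case where the component meets only one of $v$ and $w$, where the decomposing disk is based at the single shared vertex disk. Your induction can be repaired in the same spirit: at each stage split off an innermost component rather than $H_1$, refining the decomposition into nested pieces if necessary and then reassembling the original $S_i$ by a further application of the same lemma. With that modification the rest of your argument (the $2n$-gon structure of $D$, capping off to the sphere $\Sigma$, and the appeal to Theorem \ref{Gabai}) goes through as you describe.
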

\begin{proof}
We start by proving that $S_\sigma$ is a Murasugi sum of the surfaces $S_1,\ldots, S_k$.  
Consider one of the connected components $H_l$. If $H_l$ contains only one of the vertices $v$ or $w$, then using the disk associated to this vertex and $X$ we can decompose $S_l$ from $S_\sigma$ by a Murasugi sum. (See Figure \ref{Figure:1 component}.) Notice that $S_l$ is also the state surface of $H_l$, since $X$ contains a terminal vertex in $X\cup H_l$.
\begin{figure}[ht]
\begin{center}
\includegraphics{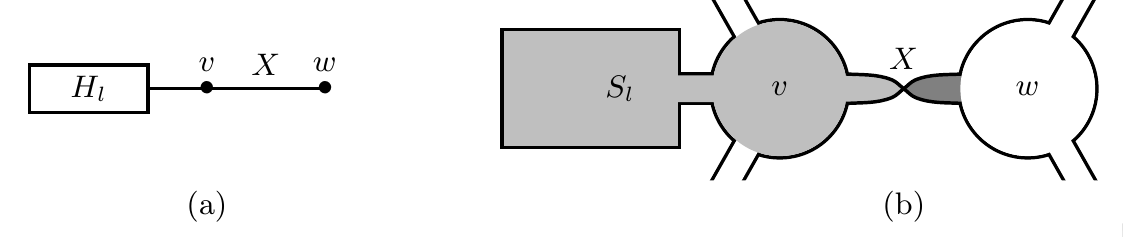}
\end{center}

\

\begin{center}
\includegraphics{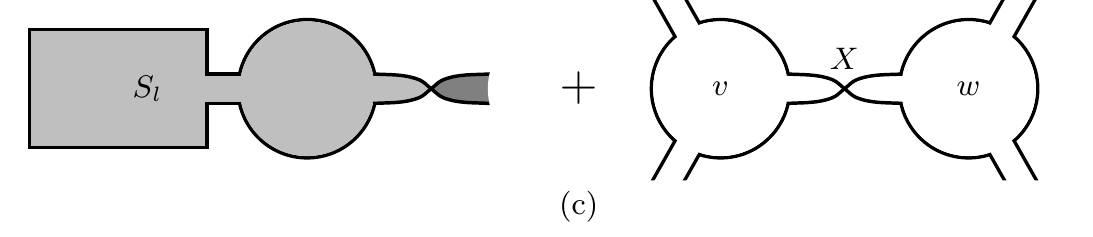}
\end{center}
\caption{: When $H_l$ is adjacent to only $v$, (a), there is a decomposition of $S_\sigma$, (b),  by $v\cup w$ as a Murasugi sum, (c).}
\label{Figure:1 component}
\end{figure}

Let us assume now that $H_l$ contains $v\cup w$. Suppose, without loss of generality, that $H_l$ is innermost with respect to $X$, i.e., there is no other component between $H_l$ and $X$ in the state graph. We can decompose $S_l$ from $S_\sigma$ by a Murasugi sum as depicted in Figure \ref{Figure:2 components}.

\begin{figure}[ht]
\begin{center}
\includegraphics{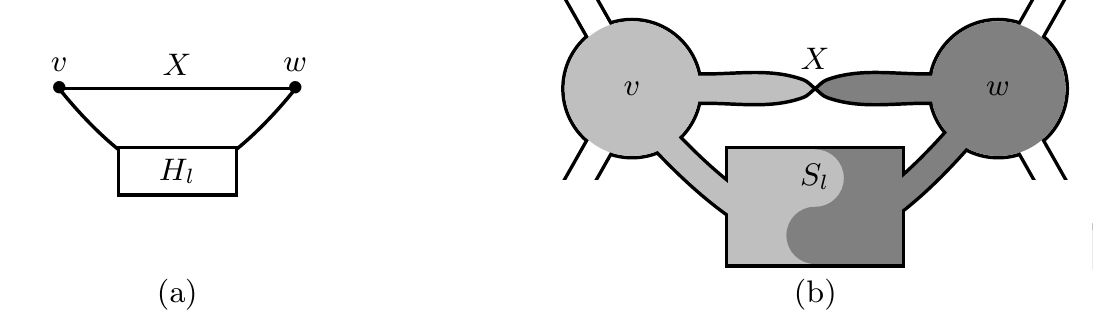}
\
\includegraphics{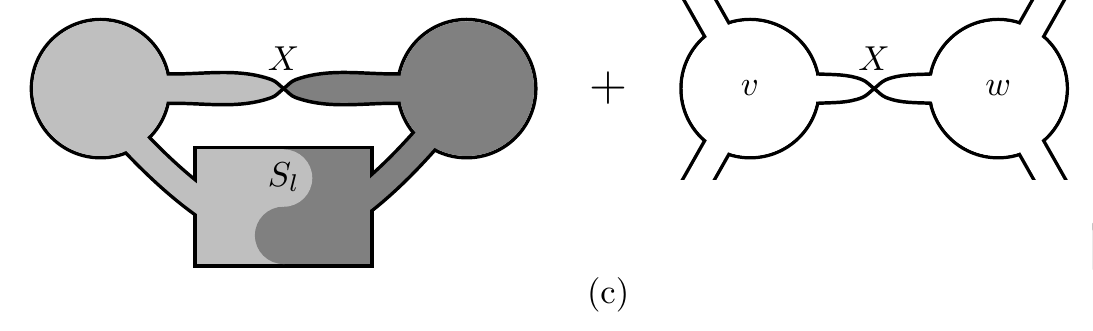}
\end{center}
\caption{: When $H_l$ is adjacent to both $v$ and $w$, (a), there is a decomposition of $S_\sigma$, (b),  by $v\cup w$ as a Murasugi sum, (c).}
\label{Figure:2 components}
\end{figure}

Repeating this procedure with subsequent innermost components we obtain the claimed Murasugi sum decomposition. Therefore, by Theorem \ref{Gabai}, $S_\sigma$ is a fiber if and only if each surface $S_l,\ldots, S_k$ is a fiber with respect to its boundary.
\end{proof}

A particular case of this lemma is when two vertices $v$ and $w$  are adjacent by multiple edges. Take two such edges and suppose $X$ and $Y$ are their labels (we also represent the edges by these letters).  Decomposing the graph $G_{\sigma}$ as in  the lemma, one of the components obtained corresponds to  the edge  $Y$. It is not hard to see that the state surface induced by the subgraph $X\cup Y$ is either a Hopf band (when edges have the same label) or an untwisted annulus (when edges have different labels).  It is well known that the Hopf band is a fiber for the Hopf link in its boundary, and that the untwisted annulus is not a fiber for the unlink in its boundary. For example, a straightforward proof of these facts follows from  Theorem \ref{stallings}.  This observation lead us to the following corollary.

\begin{cor}\label{duplicated edges}
Let $G_{\sigma}$ be a state graph and suppose there are  vertices $v$ and $w$ adjacent by two edges $X$ and $Y$.  If the edges have different labels then the surface $S_\sigma$ is not a fiber. If the edges have the same label then $S_\sigma$ is a fiber if and only if the state surface induced by the subgraph obtained by removing the edge $Y$ is a fiber.
\end{cor}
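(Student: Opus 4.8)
The plan is to apply Lemma~\ref{murasugi decomposition} directly, taking $v$ and $w$ as the two decomposing vertices and $X$ as the distinguished edge. Since $v$ and $w$ are joined by both $X$ and $Y$, the edge $Y$ is by itself one of the connected components $H_i$ appearing in the decomposition $G_\sigma = X \cup H_1 \cup \cdots \cup H_k$. First I would invoke the lemma to realise $S_\sigma$ as a Murasugi sum whose summands are the state surfaces $S_i$ of the subgraphs $X \cup H_i$; one of these summands, say $S_1$, is the state surface of the two--edge subgraph $X \cup Y$, while the remaining summands $S_2,\ldots,S_k$ are exactly the state surfaces arising from the components of $G_\sigma$ other than $Y$.

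The key local step is to identify the summand $S_1$ coming from $X \cup Y$. As noted in the observation preceding the statement, this surface is a Hopf band when $X$ and $Y$ carry the same label and an untwisted annulus when they carry different labels. Since the Hopf band is a fiber for the Hopf link while the untwisted annulus is not a fiber for the unlink, the behaviour of $S_1$ is completely determined by the labels, and no further computation with $S_1$ is needed.

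It then remains only to combine these facts. If the labels of $X$ and $Y$ differ, then $S_1$ is a non-fiber summand, so by Lemma~\ref{murasugi decomposition} (equivalently Theorem~\ref{Gabai}) the surface $S_\sigma$ cannot be a fiber. If the labels agree, then $S_1$ is a Hopf band and hence always a fiber, so the fibering of $S_\sigma$ is equivalent to the simultaneous fibering of the remaining summands $S_2,\ldots,S_k$. Applying the same decomposition to the subgraph $G_\sigma - Y$ --- where $v$ and $w$ are still joined by $X$, so that all components other than $Y$ are unchanged --- shows that these are precisely the Murasugi summands of the state surface obtained by removing the edge $Y$. Hence $S_\sigma$ is a fiber if and only if that surface is, as claimed. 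I expect the only delicate point to be this last bookkeeping: verifying that deleting $Y$ leaves the rest of the decomposition intact, so that discarding the always-fiber Hopf summand genuinely returns the state surface of $G_\sigma - Y$.
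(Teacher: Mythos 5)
Your proposal is correct and follows essentially the same route as the paper: decompose $Y$ off as a component via Lemma~\ref{murasugi decomposition}, identify the summand from $X\cup Y$ as a Hopf band or untwisted annulus, and conclude with Theorem~\ref{Gabai}. Your final bookkeeping step, reapplying the lemma to $G_\sigma - Y$ to see that the remaining summands are exactly those of the reduced subgraph, is stated only implicitly in the paper, so your version is if anything slightly more explicit.
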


\begin{proof}
Decomposing the edge $Y$ from the graph as in   Lemma \ref{murasugi decomposition},  the surface induced by $X\cup Y$ is either an untwisted annulus or a Hopf band.  By the observation above and Theorem \ref{Gabai}, in the former case  the surface $S_\sigma$ is not a fiber; in the latter case $S_\sigma$ is a fiber if and only if the remaining Murasugi summands are fibers, that is,  if  the surface induced  by the subgraph obtained by removing the edge $Y$ is a fiber.
\end{proof}

\begin{rem}
In light of Corollary \ref{duplicated edges} we assume from now on that the state graph $G_\sigma$ has no edges with different labels adjacent to the same pair of vertices.
\end{rem}

Corollary \ref{duplicated edges} explains why we do not loose fibering information by passing to the reduced graph $G_{\sigma}'$. 

\begin{cor}\label{reduced surface}
Let $L$ be a link and $\sigma$ a state for a diagram $D_L$ of $L$. Let $S'_{\sigma}$ be the state surface associated to the the reduced graph $G'_{\sigma}$ and let  $L'$ the boundary of  $S'_{\sigma}$. Then   the link $L'$ is fibered by $S'_{\sigma}$ if and only if $L$ is fibered by $S_{\sigma}$.
\end{cor}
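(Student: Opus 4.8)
The plan is to obtain Corollary \ref{reduced surface} as a direct consequence of the iterated application of Corollary \ref{duplicated edges}. The reduced graph $G'_{\sigma}$ differs from $G_{\sigma}$ precisely by the elimination of duplicated edges carrying the same label between a common pair of vertices; by the Remark following Corollary \ref{duplicated edges} we are already assuming that no two edges with \emph{different} labels join the same pair of vertices, so every multiplicity that occurs between a fixed pair of vertices is a collection of parallel edges all bearing the same label. Passing from $S_{\sigma}$ to $S'_{\sigma}$ is therefore accomplished by repeatedly cutting one such duplicated band at a time.

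First I would set up the induction on the number $N$ of duplicated edges (equivalently, the number of bands that must be removed to pass from $S_{\sigma}$ to $S'_{\sigma}$). If $N=0$ there is nothing to prove, since $G_{\sigma}=G'_{\sigma}$, $S_{\sigma}=S'_{\sigma}$, and $L=L'$. For the inductive step, suppose $N>0$ and pick a pair of vertices $v,w$ joined by (at least) two edges $X$ and $Y$ with the same label. Applying Corollary \ref{duplicated edges} to this pair, the surface $S_{\sigma}$ is a fiber for its boundary link if and only if the surface $S^{\sharp}_{\sigma}$ induced by the subgraph obtained by deleting the edge $Y$ is a fiber for its boundary. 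This new state surface has exactly $N-1$ duplicated edges, and its associated reduced graph is again $G'_{\sigma}$ with the same reduced surface $S'_{\sigma}$; by the inductive hypothesis $S^{\sharp}_{\sigma}$ fibers its boundary if and only if $S'_{\sigma}$ fibers $L'$. Chaining the two equivalences yields that $L$ is fibered by $S_{\sigma}$ if and only if $L'$ is fibered by $S'_{\sigma}$, completing the induction.

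The only point requiring care is bookkeeping: one must check that deleting a single duplicated edge $Y$ genuinely decreases by one the count of edges that still need to be removed, and that the boundary link of the intermediate surface is the correct link at each stage, so that the notion of ``being a fiber with respect to its boundary'' is unambiguous throughout. This is where I expect the main (though still modest) obstacle to lie, since one is applying Corollary \ref{duplicated edges} not to $L$ directly but to the boundary of each successively reduced surface; the repeated-Hopf-band decomposition of Corollary \ref{duplicated edges} makes this legitimate because each removed band is a Hopf band summand whose presence or absence does not affect fiberedness by Theorem \ref{Gabai}. Once the induction is phrased carefully the result follows, and no genuinely new geometric input beyond Corollary \ref{duplicated edges} is needed.
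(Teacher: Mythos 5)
Your proposal is correct and follows the same route as the paper, which simply states that the corollary is an immediate consequence of Corollary \ref{duplicated edges}; your induction on the number of duplicated edges is exactly the bookkeeping the paper leaves implicit.
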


\begin{proof}
This is a immediate consequence of Corollary \ref{duplicated edges}.
\end{proof}

\begin{lem}\label{X cycle}
Let $G_{\sigma}$ be a state graph and suppose there are two vertices, $v$ and $w$, that decompose $G_\sigma$ into two connected components $X$ and $Y$, and there is an alternating path $\alpha$ from $v$ to $w$, in $Y$, that together with $X$ define an inner cycle. (See Figure \ref{Figure: alternating cycle}.) Consider also the state surface $S_y$ induced by $\sigma$ and $Y$, and the state surface $S_x$ induced by $\sigma$ and $X\cup\alpha$. Then, $S_\sigma$ is a fiber if and only if each surface, $S_x$ and $S_y$, is a fiber with respect to its boundary. 

\begin{figure}[ht]
\begin{center}
\includegraphics{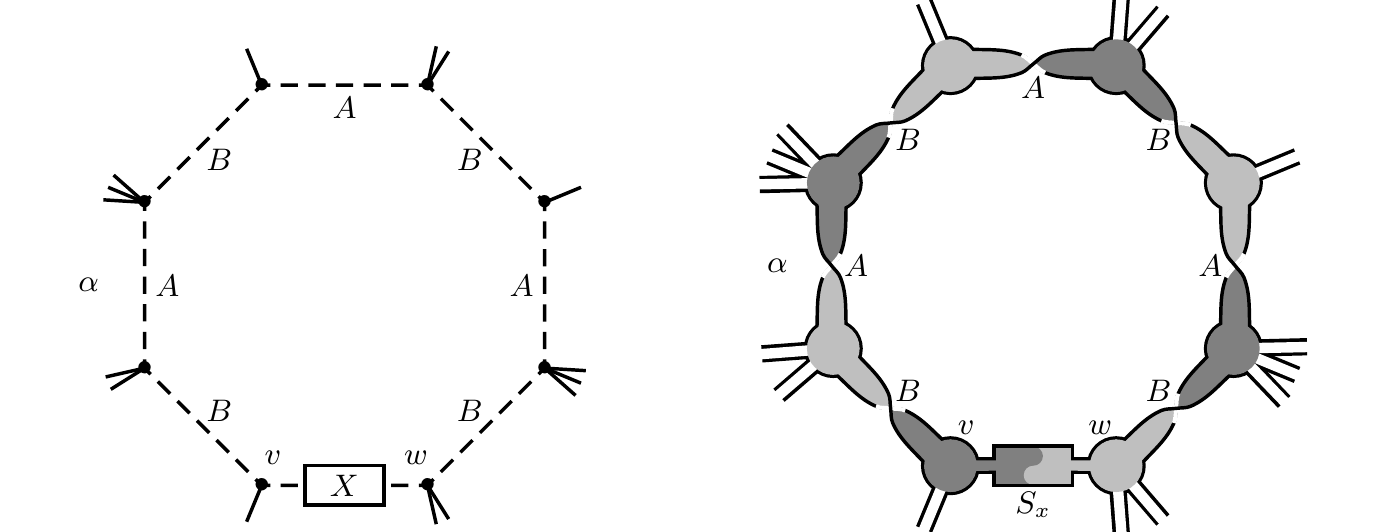}
\end{center}
\caption{: An alternating path $\alpha$ in that together with $X$ define an inner cycle.}
\label{Figure: alternating cycle}
\end{figure}

\end{lem}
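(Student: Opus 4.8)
The plan is to realize $S_\sigma$ as a Murasugi sum of $S_x$ and $S_y$ and then invoke Gabai's Theorem \ref{Gabai}, exactly in the spirit of the proof of Lemma \ref{murasugi decomposition}. First I would pin down the gluing region. Since $\alpha \subseteq Y$ and $X \cap Y \subseteq \{v,w\}$, the subgraphs $X\cup\alpha$ and $Y$ together cover $G_\sigma$ and intersect precisely along $\alpha$; hence the associated surfaces satisfy $S_x\cup S_y = S_\sigma$ and $S_x\cap S_y = D$, where $D$ is the state surface over the path $\alpha$. Because $\alpha$ is a path, $D$ is an embedded (possibly twisted) disk, and it is this disk that will play the role of the $2n$-sided gluing region in the Murasugi sum.

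Next I would produce the separating sphere. Since $X$ together with $\alpha$ defines an \emph{inner} cycle, that cycle is innermost in its region, so the disk-region $R$ of the plane bounded by it contains no further disk or band of $\sigma$. Constructing a sphere $S$ out of $R$ as in the figures accompanying Lemma \ref{murasugi decomposition}, one side of $S$ carries the part of the surface lying over $X$ and the other side carries the part lying over $Y\setminus\alpha$, with $D\subset S$ and $S_x,\,S_y$ confined to the two complementary balls; the innermost condition is exactly what guarantees that $S$ meets the rest of $S_\sigma$ only along $D$, so that no stray band punctures the sphere. It then remains to check that $D$ is genuinely a $2n$-gon in the sense of the definition of Murasugi sum, i.e. that $\partial D$ splits into arcs that alternately meet $S_x\setminus D$ and $S_y\setminus D$. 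This is where the \emph{alternating} hypothesis on $\alpha$ enters: travelling along $\partial D$, the bands of $X$ (attached at $v$ and $w$) and the bands of $Y\setminus\alpha$ (attached along $\alpha$) are forced into the alternating pattern demanded of a $2n$-gon, precisely because the alternating condition dictates, for each disk of $\alpha$, on which side the neighbouring bands lying in a common region may attach.

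Once the Murasugi sum $S_\sigma = S_x * S_y$ along $D$ is established, Theorem \ref{Gabai} applies verbatim and yields that $S_\sigma$ is a fiber for its boundary if and only if each of $S_x$ and $S_y$ is a fiber for its own boundary, which is the claim. I expect the main obstacle to lie not in the sphere construction (which follows routinely from innermost-ness) but in the verification that $D$ has the correct alternating side pattern: one must argue carefully that the alternating hypothesis forbids a band of $X$ and a band of $Y\setminus\alpha$ from attaching to $D$ in an order that would destroy the plumbing structure, and this bookkeeping along $\partial D$ is the heart of the argument.
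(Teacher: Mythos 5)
Your proposal follows essentially the same route as the paper: its proof likewise realizes $S_\sigma$ as a Murasugi sum of $S_x$ and $S_y$ along the portion of the surface lying over $\alpha$, using the innermost-cycle condition to produce a ball $Q$ meeting $S_\sigma$ exactly in $S_x$ with the band over $\alpha$ in $\partial Q$, and then applies Gabai's Theorem \ref{Gabai}. The only difference is in where the alternating hypothesis is invoked --- the paper uses it to guarantee that the band over $\alpha$ is untwisted and so can actually be placed in the sphere $\partial Q$ (your ``possibly twisted'' gluing disk would not literally satisfy the definition of Murasugi sum), whereas you locate it in the verification of the $2n$-gon attachment pattern; these are two informal descriptions of the same geometric constraint, and the paper's own proof is no more detailed on this point than yours.
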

\begin{proof}
 Since $X\cup \alpha$ defines an inner cycle and $\alpha$ is alternating, with respect to the labels, then there is a ball $Q$ intersecting $S_\sigma$ at $S_x$ with the band associated with $\alpha$ in $\partial Q$. In this way, we can decompose $S_\sigma$ as a Murasugi sum of $S_x$ and $S_y$, as depicted in Figure \ref{Figure: alternating cycle decomposition}.\\

\begin{figure}[ht]
\begin{center}
\includegraphics{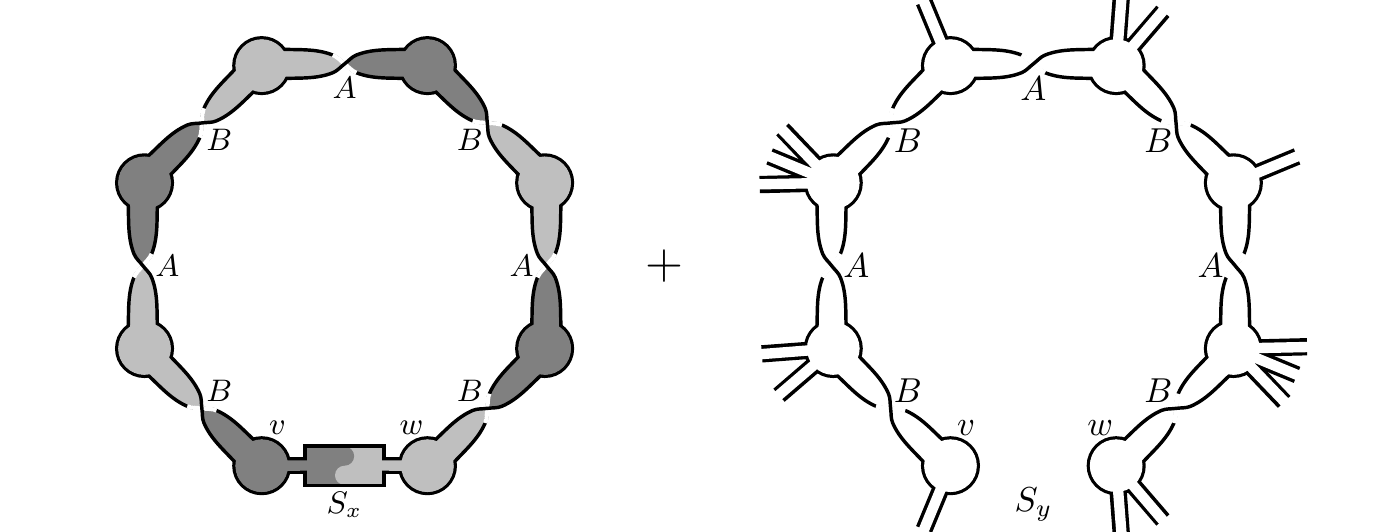}
\end{center}
\caption{: Decomposition of $S_\sigma$ by $\alpha$ as a Murasugi sum of $S_x$ and $S_y$.}
\label{Figure: alternating cycle decomposition}
\end{figure}

From the result of Gabai and this Murasugi sum we have the statement of the lemma.
\end{proof}

\begin{lem}\label{alternating inner cycle}
If the state graph $G_{\sigma}$ has an inner cycle that is alternating with respect to the labels $A$ or $B$ then $S_{\sigma}$ is not a fiber of $L$.
\end{lem}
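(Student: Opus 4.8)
The plan is to isolate the annular subsurface spanned by the alternating inner cycle as a Murasugi summand of $S_\sigma$, to show that this summand is an untwisted annulus, and then to invoke Gabai's theorem to conclude.

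First I would reduce to the cycle itself. Let $C$ denote the alternating inner cycle, pick one of its edges $X$ with endpoints $v$ and $w$, and let $\alpha$ be the complementary arc of $C$, an alternating path from $v$ to $w$. Since $C$ is innermost in its region and $\alpha$ is alternating, the Murasugi construction of Lemma \ref{X cycle} applies: there is a ball meeting $S_\sigma$ exactly in the state surface $S_C$ spanned by $C=X\cup\alpha$, which exhibits $S_\sigma$ as a Murasugi sum of $S_C$ with the surface spanned by the remaining component. By Theorem \ref{Gabai}, $S_\sigma$ is a fiber only if $S_C$ is a fiber, so it suffices to prove that $S_C$ is not a fiber.

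Next I would identify $S_C$. Because $C$ is alternating it has even length $2m$, with $m$ bands labelled $A$ and $m$ labelled $B$ joining $2m$ disks cyclically, so $\chi(S_C)=0$. If $S_C$ is non-orientable it is not a Seifert surface and we are done; otherwise, being connected with vanishing Euler characteristic and nonempty boundary, it is an annulus whose core is $C$, and this core is unknotted since $C$ is innermost in its region. It then remains to compute the framing of the annulus. As recorded in the discussion preceding Corollary \ref{duplicated edges}, an $A$-band and a $B$-band contribute half-twists of opposite sign: two equally labelled bands give a Hopf band (a full twist), while two oppositely labelled bands give the untwisted annulus. Summing the half-twists around $C$, the $m$ contributions of each sign cancel, so the framing is zero; thus $S_C$ is an unknotted untwisted annulus and $\partial S_C$ is a two-component unlink.

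Finally, an untwisted annulus is not a fiber for the unlink in its boundary (for instance by Theorem \ref{stallings}), so $S_C$ is not a fiber, and by the Murasugi-sum reduction above neither is $S_\sigma$. The step I expect to be the main obstacle is the framing computation: one must check that the local statement ``an $A$-band and a $B$-band twist oppositely'' globalizes to a genuine zero framing of the embedded annulus around the entire cycle, and that the innermost hypothesis really forces the core to be unknotted, so that the zero-framed boundary is the honest unlink rather than some other two-component link. Setting up the Murasugi decomposition carefully from the innermost condition, so that the isolating ball meets $S_\sigma$ only along a single plumbing band, is the other place that requires attention.
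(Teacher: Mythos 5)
Your proposal is correct and follows essentially the same route as the paper: apply Lemma \ref{X cycle} with $X$ one edge of the cycle and $\alpha$ the complementary alternating path, identify the summand $S_x$ as an untwisted annulus, and conclude via Gabai's theorem that $S_\sigma$ is not a fiber. The only difference is that you spell out the framing computation (the cancellation of the $A$- and $B$-half-twists) and the unknottedness of the core, which the paper leaves implicit in the assertion that $S_x$ is an annulus that is not a fiber of its boundary.
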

\begin{proof}
Consider an inner cycle $\gamma$ of $G_\sigma$. In Lemma \ref{X cycle}, let $X$ be one edge of $\gamma$ and $\alpha$ the remaining edges. Then, $S_\sigma$ is a fiber if and only if $S_x$ and $S_y$ are fibers. Since $\gamma$ is alternating then $S_x$ is an annulus, which is not a fiber of its boundary. Hence, $S_\sigma$ is not a fiber of $L$.  
\end{proof}

\begin{proof}[Proof of Theorem \ref{theorem:alternating}]
We start by observing that if $G'_\sigma$ is a tree then $S'_\sigma$ is a disk, and hence a fiber of $L'$. Therefore, by Corollary \ref{reduced surface}, $L$ is fibered by $S_\sigma$.\\
Suppose now that $G'_\sigma$ has a cycle. Then $G_\sigma$ also has a cycle. Consider an inner cycle $\alpha$ of $G_\sigma$.  Suppose  there is a tree component to the interior of $\alpha$ at the common vertex. Then, using Lemma \ref{murasugi decomposition}, we may decompose this tree. The surface induced by this tree is a disk, and hence a fiber for its boundary circle. Therefore, we may assume the following: consecutive edges in $\alpha$ are also consecutive in the common vertex, i.e, there are no edges to the interior of $\alpha$ between them.     Given any two such edges in $\alpha$, since $\sigma$ is alternating they have  different labels. Hence, $\alpha$ is alternating. Consequently, by Lemma \ref{alternating inner cycle} the state surface $S_\sigma$ is not a fiber of $L$.
\end{proof}

\section{A new proof of Theorem \ref{futer}}\label{homogeneous}

In this section we present a different proof of Theorem \ref{futer}. This theorem first appeared in \cite{FKP} (Theorem 5.21), but the proof presented there consists of a detailed study of polyhedral decompositions of $S^3-S_{\sigma}$. In \cite{Futer} a much simpler proof is given: it is proved inductively via Murasugi sums together with Theorem \ref{Gabai} to deduce fibering information. Some of these ideas were also independently used in the work of the first author \cite{Girao} and in the previous section. The proof we present is  a consequence of Stallings' fibration criteria \cite{Stallings}.

\begin{thm}[Stallings]\label{stallings}
Let $T\subset S^3$ be a compact, connected, oriented surface with nonempty boundary $\partial T$. Let $T\times[-1,1]$ be a regular neigborhood of $T$ and let $T^+=T\times\{1\}\subset S^3-T$. Let $f=\varphi|_{T}$, where $\varphi:T\times[-1,1]\longrightarrow T^+$ is the projection map. Then $T$ is a fiber for the link $\partial T$ if and only if  the induced map $f_*:\pi_1(T)\longrightarrow\pi_1(S^3-T)$ is an isomorphism.   
\end{thm}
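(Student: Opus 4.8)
The plan is to prove the two implications separately, after repackaging fibering as a product condition on the complement cut open along $T$. Write $M=E(L)$ for the exterior of $L=\partial T$ and let $M_T$ be $M$ cut along $T$; since $T$ is two-sided, $\partial M_T$ contains two copies $T^+,T^-$ of $T$, and there is a homotopy equivalence $S^3-T\simeq M_T$ under which the pushoff $T\times\{1\}$ becomes $T^+$. Thus $f$ is identified with the composite $T\xrightarrow{\cong}T^+\hookrightarrow M_T$ and $f_*$ with the inclusion-induced map $i_{+}\colon\pi_1(T)\to\pi_1(M_T)$, while the fibering condition is equivalent to $M_T\cong T\times[0,1]$. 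The necessity direction is then immediate: if $M$ is the mapping torus of a homeomorphism $\psi\colon T\to T$, cutting along the fiber gives $M_T\cong T\times[0,1]$, so $i_+$ is the inclusion $T\times\{1\}\hookrightarrow T\times[0,1]$ of a deformation retract and hence an isomorphism.

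For sufficiency I would first observe that injectivity of $f_*=i_+$, together with irreducibility of $S^3$, forces $T$ to be incompressible: by the Loop Theorem and Dehn's Lemma a compressing disk would produce a nontrivial element of $\ker i_+$. Let $\phi\colon\pi_1(M)\twoheadrightarrow\mathbb Z$ be the homomorphism dual to $[T]$, and let $\widetilde M\to M$ be the associated infinite cyclic cover, so that $\pi_1(\widetilde M)=\ker\phi$. Cutting along $T$ exhibits $\pi_1(M)$ as an HNN extension of $\pi_1(M_T)$ with associated subgroups $i_+(\pi_1 T)$ and $i_-(\pi_1 T)$, and realizes $\widetilde M$ as a bi-infinite stack $\cdots\cup M_T^{(0)}\cup M_T^{(1)}\cup\cdots$ of copies of $M_T$ glued by the two inclusions. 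A Bass--Serre (van Kampen telescoping) computation then shows that $\ker\phi$ is finitely generated precisely when both $i_+$ and $i_-$ are isomorphisms, in which case the stack collapses and $\ker\phi\cong\pi_1(M_T)\cong\pi_1(T)$.

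It therefore remains to promote the hypothesis on $i_+$ to the statement that $i_-$ is an isomorphism as well. On homology this follows from Poincar\'e--Lefschetz duality in $S^3$: under the duality isomorphism $H_1(M_T)\cong H_1(T)^{*}$ the maps $i_{+*}$ and $i_{-*}$ are carried to the Seifert form $V$ and its transpose $V^{\top}$, so that $\det V=\det V^{\top}=\pm1$ makes them isomorphisms simultaneously, and incompressibility upgrades this from homology to $\pi_1$. With $\ker\phi\cong\pi_1(T)$ now finitely generated, I would invoke Stallings' group-theoretic fibration theorem: an irreducible compact $3$-manifold admitting a surjection $\phi\colon\pi_1(M)\to\mathbb Z$ with finitely generated kernel fibers over $S^1$, with fiber a surface $F$ satisfying $\pi_1(F)\cong\ker\phi\cong\pi_1(T)$; since $T$ is incompressible and realizes the dual class of $\phi$, matching Euler characteristics identifies $F$ with $T$ up to isotopy, so $T$ is a fiber.

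I expect the main obstacle to be exactly this duality step together with the homology-to-$\pi_1$ upgrade, since it is what guarantees that the infinite cyclic cover has finitely generated fundamental group and thus that the telescope collapses. The deepest single input, which I would quote rather than reprove, is Stallings' fibration theorem itself, whose proof rests on simplifying a level surface of $\phi$ by compressions controlled by the finiteness of $\ker\phi$, again by way of the Loop Theorem and Dehn's Lemma.
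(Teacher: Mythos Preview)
The paper does not prove this theorem. Theorem~\ref{stallings} is quoted verbatim as a result of Stallings, with a citation to \cite{Stallings}, and is then used as a black box in the authors' homological proof of Theorem~\ref{futer}. There is therefore nothing in the paper to compare your argument against.

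That said, a brief comment on your sketch. The forward direction and the reduction to showing that both $i_+$ and $i_-$ are isomorphisms are standard and fine, and quoting Stallings' group-theoretic fibration theorem at the end is reasonable (indeed, this is essentially how one usually packages the result). The step you yourself flag as the obstacle is genuinely incomplete as written: knowing that $i_{-*}$ is an isomorphism on $H_1$ and that $i_-$ is injective on $\pi_1$ (from incompressibility) does not, by purely group-theoretic reasoning, force $i_-$ to be surjective on $\pi_1$, since an injective homomorphism between free groups of the same finite rank that abelianizes to an isomorphism need not be onto. The usual way to close this gap is topological rather than algebraic: once $i_+$ is an isomorphism one shows, using the Loop and Sphere Theorems together with irreducibility, that $M_T$ is an $h$-cobordism from $T^+$ to $T^-$ and hence a product $T\times[0,1]$; equivalently, one argues directly that the infinite cyclic cover deformation-retracts to a single lift of $T$. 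Either route bypasses the attempted homology-to-$\pi_1$ upgrade.
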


We describe the induced map in the case $T$ is the state surface associated to the reduced graph of a homogeneous link diagram. We will see that when $G'_{\sigma}$ is a tree, the reduced surface $S'_{\sigma}$ is a disk and  the map $f_*$ is trivial, as desired. When $G'_{\sigma}$ has cycles, we show that the map $f_*$ cannot be an isomorphism by showing that the corresponding map on first homology is not an isomorphism. 

By using Lemma \ref{murasugi decomposition}, we may  decompose the reduced state graph $G'_{\sigma}$ associated with the homogeneous link diagram along cut vertices. This has also been observed in Lemma 3 of \cite{Futer}, where he proves that the reduced graph has no cut vertices if and only if it is  an all-$A$ or all-$B$ state. Thus, we only need to prove this result for  all-$A$ or all-$B$ states. We provide the proof for the case of an all-$A$ state, the other case being similar. 

First note that in the absence of cut vertices in the graph $G'_{\sigma}$, the surface $S'_{\sigma}$ is a checkerboard surface.  If the graph $G'_{\sigma}$ is a tree, then the surface $S'_{\sigma}$ is a disk. Hence $S'_\sigma$ is a fiber, and by Corollary \ref{reduced surface} the surface $S_\sigma$ is also a fiber.

Suppose now that  $G'_{\sigma}$ is not a tree, i.e., that it has cycles. We will prove that this contradicts Stalling's theorem. First note that the fundamental group of the surface $S'_{\sigma}$ is free. Consider the inner cycles $\alpha_{1},...,\alpha_{n}$ in $G'_{\sigma}$ oriented in the counter-clockwise direction. Since $S_\sigma$ is a fiber, it is orientable, hence $S'_{\sigma}$ is also orientable and we choose a base point $a$ of $\pi_1(S'_\sigma)$ such that, when seen  from above the projecting plane,  we see the base point $a$ in the ``$+$'' side of $S'_{\sigma}$.  Finally, add  arcs $h_{1},...,h_{n}$  from $a$ to each of the inner cycles above. This gives loops $\beta_{i}=h_{i}\alpha_{i}h_{i}^{-1}$,  based at $a$.  This set of based loops corresponds to a generating set  for $\pi_1(S'_{\sigma})$. These generators will  be denoted by $u_{1},...,u_{n}$.

Since the surface $S'_{\sigma}$ is a checkerboard surface, its complement $S^3-S'_{\sigma}$ also has a free fundamental group. We now describe a generating set for this group.  There are two types of white regions  in the projecting plane: one unbounded region and $n$ bounded ones, which correspond to the inner cycles of $G'_{\sigma}$. Let $C_0$ denote the unbounded white region determined by  $S'_{\sigma}$ and let $A_i$ denote a white region determined by the inner cycle $\alpha_i$.  Let $\gamma_{i}\subset S^3-S'_{\sigma}$ be a semi-circle   with one endpoint in $C_0$ and the other in $A_i$,  lying under the projecting plane.  Let ${f:S'_{\sigma}\longrightarrow S^3-S'_{\sigma}}$ be the function described in Theorem \ref{stallings}. Associated to each region $A_i$  we construct a simple closed curve by connecting the endpoints of the arc $\gamma_{i}$ to the point $f(a)$ by straight line segments. Each of these curves is oriented so that, starting at $f(a)$, we move along the line segment connecting $f(a)$ to the endpoint of $\gamma_{i}$ in $A_i$, then move along $\gamma_{i}$ to the second endpoint and then back to $f(a)$ through the second line segment.  We have built  loops with base point $f(a)$ corresponding to a set of generators for $\pi_1(S^3-S'_{\sigma})$.   These generators are denoted   by  $x_{1},...,x_{n}$, according to the label of region they cross.

Let $S^{'+}_{\sigma}$ be the copy of $S'_{\sigma}$ in $S^3-S'_{\sigma}$ parallel to $S'_{\sigma}$,  obtained  from $S'_{\sigma}$ by  pushing it  in the  ``$+$'' direction.  This is formally defined by the map  ${f:S'_{\sigma}\longrightarrow S^3-S'_{\sigma}}$ described in Theorem \ref{stallings}. The induced map $f_*$ can be described by determining the image of each generator $u_i\in\pi_1(S'_{\sigma})$. We write $f_*(u_i)$ as a word on the generators $x_1, ..., x_n$, given by the image the loop $\beta_i=h_{i}\alpha_{i}h_{i}^{-1}$: 
$$f_*(u_i)=w_{h_i}w_{\alpha_i}w_{h_i}^{-1}$$
where $w_{h_i}$ is the word on the letters $x_1, ..., x_n$ given by the image of the arcs $h_i$ under the map $f$. The word  $w_{\alpha_i}$ is obtained by the image of the cycle  $\alpha_i$ as follows.  Suppose that $\alpha_i$ and $\alpha_j$ have a common edge. Vertices are labeled ``$+$'' or ``$-$'', depending on the side of the surface they lie.  We have two possibilities: 
\begin{itemize}
\item[Case 1.] The orientation induced on the edge by $\alpha_i$ is from a ``$+$'' vertex to a ``$-$'' vertex. In this case we write the letter $x_i$. (See Figure \ref{map_f} left.)
\item[Case 2.] The orientation induced on the edge by $\alpha_i$ is from a ``$-$'' vertex to a ``$+$'' vertex. In this case we write the letter $x_j^{-1}$. (See Figure \ref{map_f} right.)
\end{itemize}


\begin{figure}[ht]
\begin{center}
\includegraphics{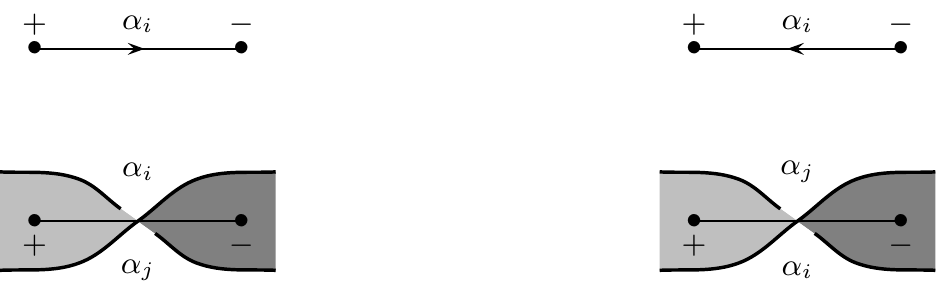}
\end{center}
\caption{: Case 1 (left); case 2 (right).}
\label{map_f}
\end{figure}

\begin{rem}
It is important to notice the inner cycle $\alpha_i$ may share an edge with the unbounded region $C_0$. If this is the case, in 2 above, we write no letters corresponding to this edge. 
\end{rem}

   \begin{rem}\label{Remark:opposite signs}
   Observe that the loops $\alpha_i$ and $\alpha_j$ induce reverse orientations on  the edges they share. Therefore, when we write the letters corresponding to the loop $\alpha_j$, the letter corresponding to this edge is the same letter as $\alpha_i$, with opposite sign, i.e., either $x_i^{-1}$ or $x_j$.  This is illustrated in Figure \ref{map_f}. 
\end{rem}

Now we consider the map $\bar{f}_*:H_1(S'_{\sigma})\longrightarrow H_1(S^3-S'_{\sigma})$ induced on homology by $f_*$. Denote by $\bar{u}_{1},...,\bar{u}_{n}$ the generators of $H_1(S'_{\sigma})$, corresponding to the  generators of $\pi_1(S'_{\sigma})$. The generators of $H_1(S^3-S'_{\sigma})$ are defined  similarly and denoted $\bar{x}_{1},...,\bar{x}_{n}$.

The map $\bar{f}_*$ is given by a $n \times n$  matrix $\mathcal{A}=[a_{ij}]$, where the $i$-th column is the vector $\bar{f}(\bar{u}_i)\in H_1(S^3-S'_{\sigma})$.  By the description of the map $f_*$ and the remarks above,  the matrix $\mathcal{A}$ has the following properties: 

\begin{itemize}
\item[(i)] $a_{ii}\geq 2$;
\item[(ii)] $a_{ii}\geq\displaystyle\sum_{j\neq i}|a_{ij}|$
\item[(iii)]  $a_{ii}\geq\displaystyle\sum_{j\neq i}|a_{ji}|$
\end{itemize}
(i) follows from the fact that every inner cycle in $G'_{\sigma}$ has at least 4 edges; (ii) and (iii) follow from the fact that, when we go through the cycle $\alpha_i$, at every other edge we write the letter $x_i$ and at the remaining edges we write one of the other letters $x_j$ or write no letters (as in Remark \ref{Remark:opposite signs}).   

To prove that the map $f_*$ is not an isomorphism if $G'_{\sigma}$ is not a tree (i.e.,  has cycles), it suffices to prove the matrix $\mathcal{A}$ is not invertible over $\mathbb{Z}$. This is straightforward by the following theorem.

\begin{thm}
Let $\mathcal{A}=[a_{ij}]$ be such that $a_{ii}\geq \max(2,\sum\limits_{j\neq i}|a_{ij}|),\forall i\in\{1,\ldots,n\}$. If $\det(\mathcal{A})\neq 0$, then $\det(\mathcal{A})\geq 2$ and this inequality is sharp.
\end{thm}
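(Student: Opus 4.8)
The plan is to prove the sharper dichotomy that an \emph{integer} matrix satisfying the hypotheses has determinant either $0$ or at least $2$; integrality of the entries is essential here, since over $\mathbb{R}$ one can arrange $0<\det(\mathcal{A})<2$ (for instance a $2\times2$ matrix with rows $(2,2)$ and $(1.9,2)$), so I will use throughout that the $a_{ij}$ are integers, as they are in our homological setting. First I would record that $\det(\mathcal{A})\ge 0$ always. This follows from a continuity argument: writing $\mathcal{A}(t)=D+t(\mathcal{A}-D)$ with $D$ the diagonal part, each $\mathcal{A}(t)$ has positive diagonal and is weakly diagonally dominant, and for $t\in[0,1)$ it is in fact strictly diagonally dominant, hence nonsingular; since $\det(\mathcal{A}(0))=\prod_i a_{ii}>0$ and $t\mapsto\det(\mathcal{A}(t))$ is continuous and nonvanishing on $[0,1)$, it stays positive, so $\det(\mathcal{A})=\lim_{t\to1}\det(\mathcal{A}(t))\ge 0$. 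As $\det(\mathcal{A})$ is then a nonnegative integer, the whole theorem reduces to excluding the value $\det(\mathcal{A})=1$, and sharpness is already witnessed by $[2]$ (or by $\left(\begin{smallmatrix}2&2\\1&2\end{smallmatrix}\right)$).

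The key technical ingredient I would isolate is a comparison between the principal minor and the off-diagonal minors along a fixed row. Writing $M_{ij}$ for the minor of $\mathcal{A}$ obtained by deleting row $i$ and column $j$, and $R_i=\sum_{j\ne i}|a_{ij}|$, I claim $|M_{ij}|\le M_{ii}$ for all $i,j$. To see this (say $i=1$), assume first that $\mathcal{A}$ is strictly diagonally dominant, so the principal submatrix $\mathcal{A}_{\hat1\hat1}$ is invertible and $M_{11}>0$; expressing the first column $b_1=(a_{21},\dots,a_{n1})^{\top}$ in the basis given by the remaining columns, $\mathcal{A}_{\hat1\hat1}\,c=b_1$, multilinearity of the determinant gives $|M_{1j}|=|c_j|\,M_{11}$, so it suffices to prove $\|c\|_\infty\le1$. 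This is a maximum-principle estimate: if $|c_m|$ is maximal, the $m$-th equation yields $\big(a_{mm}-\sum_{l\ne1,m}|a_{ml}|\big)|c_m|\le|a_{m1}|$, and diagonal dominance gives $a_{mm}-\sum_{l\ne1,m}|a_{ml}|\ge|a_{m1}|$, whence $|c_m|\le1$. The weakly dominant case then follows by bumping the diagonal by $\epsilon>0$ and letting $\epsilon\to0$, using continuity of minors.

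With these in hand I would finish by induction on $n$, the base case $n=1$ being $\det=a_{11}\ge2$. For the inductive step, suppose $\det(\mathcal{A})\ne0$ and split on whether every row is tight. If $a_{ii}=R_i$ for all $i$, then modulo $2$ each row sum of $\mathcal{A}$ is $a_{ii}+\sum_{j\ne i}a_{ij}\equiv 2\sum_{j\ne i}|a_{ij}|\equiv0$, so the all-ones vector lies in the kernel of $\mathcal{A}$ over $\mathbb{F}_2$ and $\det(\mathcal{A})$ is even; being nonzero and nonnegative, it is at least $2$. Otherwise some row $i_0$ is strictly dominant, $a_{i_0i_0}\ge R_{i_0}+1$; expanding $\det(\mathcal{A})$ along that row and applying $|M_{i_0 j}|\le M_{i_0 i_0}$ gives $\det(\mathcal{A})\ge(a_{i_0i_0}-R_{i_0})M_{i_0i_0}\ge M_{i_0i_0}$. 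Now $M_{i_0i_0}=\det(\mathcal{A}_{\hat i_0\hat i_0})$ again satisfies the hypotheses, so by induction it is $0$ or $\ge2$; if it is $\ge2$ we are done, while if it is $0$ the comparison forces every $M_{i_0 j}=0$, hence $\det(\mathcal{A})=0$, contradicting our assumption.

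The step I expect to be the main obstacle is the minor comparison $|M_{ij}|\le M_{ii}$: everything else is either the soft positivity of the determinant or bookkeeping, but this inequality is what lets a single strictly dominant row control all of the off-diagonal cofactors, and it is the only place where diagonal dominance is used quantitatively. The parity observation in the all-tight case is the other genuinely arithmetic input, and it is precisely what eliminates the remaining possibility $\det(\mathcal{A})=1$ when no row has slack.
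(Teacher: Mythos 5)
Your proof is correct, and while it shares the paper's overall skeleton (induction on $n$, with the key quantitative input being that the diagonal minor dominates the off-diagonal minors along its row), the two arguments diverge in how they extract the factor of $2$ and in how they prove the minor comparison. The paper works with the adjugate matrix $\mathcal{B}$ and the identity $\mathcal{A}\mathcal{B}=\det(\mathcal{A})\mathcal{I}_n$: it splits on whether a column of $\mathcal{B}$ has a strictly maximal entry (in which case diagonal dominance forces that entry onto the diagonal, and the induction hypothesis applied to $b_{ii}=M_{ii}$ gives $b_{ii}\geq 2$) or is flat in absolute value (in which case $\det(\mathcal{A})$ is a nonzero multiple of $b_{jj}\geq 2$). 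You instead split on whether every row is tight: the all-tight case is killed by a parity argument over $\mathbb{F}_2$ that has no counterpart in the paper, and the slack case uses cofactor expansion together with your maximum-principle proof of $|M_{ij}|\leq M_{ii}$ -- which is the same underlying fact the paper uses, reached by solving $\mathcal{A}_{\hat\imath\hat\imath}c=b_i$ rather than by inspecting $\mathcal{A}\mathcal{B}$. Your version buys two things the paper leaves implicit: the explicit observation that integrality is essential, and the homotopy argument establishing $\det(\mathcal{A})\geq 0$, which the paper tacitly assumes when it writes $\det(\mathcal{A})=\left|\sum_k a_{ik}b_{ki}\right|$. The one point where the paper does more is sharpness: its tridiagonal family realizes $\det=2$ for every $n$, whereas your witnesses $[2]$ and $\left(\begin{smallmatrix}2&2\\1&2\end{smallmatrix}\right)$ only cover $n\leq 2$; if ``sharp'' is read as ``sharp for each $n$,'' you should exhibit such a family as well.
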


\begin{proof}
We will prove the theorem by induction on $n$.

For $n=1$, $\det(\mathcal{A})=\det[a_{11}]=a_{11}\geq 2$.

Consider now any $n\in\mathbb{N}$ and suppose that the result is true for $n-1$.

Suppose $\det(\mathcal{A})\neq0$ and let $\mathcal{B}=[b_{ij}]\in M_n(\mathbb{Z})$ be the adjugate matrix of $\mathcal{A}$. Then $\mathcal{A}\mathcal{B}=(\det \mathcal{A})\mathcal{I}_n$. 

If all elements of the column $j$ of $\mathcal{B}$ have the same absolute value $b_{jj}$, then \sloppy${\det(\mathcal{A})=\sum_k a_{ij}b_{ji}}$ is a multiple of $b_{jj}\geq 2$.

If not, suppose $|b_{ij}|\geq |b_{kj}|,\forall k\in\{1,\ldots,n\}$ and $|b_{ij}|>|b_{kj}|$ for some $k$. Then
$$\left|\sum\limits_{k=1}^n a_{ik}b_{kj}\right|=\left|a_{ii}b_{ij}+\sum\limits_{k\neq i}a_{ik}b_{kj}\right|\geq |a_{ii}b_{ij}|-\sum\limits_{k\neq i}|a_{ik}b_{kj}|>|a_{ii}b_{ij}|-|a_{ii}b_{ij}|=0.$$
Since $\mathcal{A}\mathcal{B}$ is a diagonal matrix, then $i=j$. Therefore, $|b_{ii}|>|b_{ki}|,\forall k\neq i$. Furthermore, by the induction hypothesis, $b_{ii}\geq 2$. Hence
$$
\det(\mathcal{A})=\left|\sum_{k=1}^n a_{ik}b_{ki}\right|=\left|a_{ii}b_{ii}+\sum\limits_{k\neq i}a_{ik}b_{ki}\right|\geq|a_{ii}b_{ii}|-\sum\limits_{k\neq i}|a_{ik}b_{ki}|\geq 
$$
$$
\geq a_{ii}b_{ii}-a_{ii}(b_{ii}-1)=a_{ii}\geq 2.
$$

To see that the inequality is sharp, observe that the determinant of the $n\times n$ tridiagonal matrix
$$
\mathcal{A}=\left[
\begin{array}{cccccccc}
2 & 2 & 0 & 0 & 0 & \cdots & 0& 0 \\
1 & 2 & -1 & 0 & 0& \cdots & 0 &0 \\
1 & 0 & 2 & 1 & 0& \cdots & 0 & 0\\
0 & 0 & 1 & 2 & 1&\cdots & 0 & 0\\
\vdots &\vdots &\vdots &\vdots &\vdots &\ddots & \vdots& \vdots\\
0 & 0 & 0 & 0 & 0&\cdots & 2 & 1 \\
0 & 0 & 0 & 0 & 0&\cdots & 1 & 2 
\end{array}
\right]
$$
is 2, for every $n\in\mathbb{N}$.
\end{proof}

\end{document}